
\documentclass[12pt,leqno]{amsproc}
\textwidth=30cc

\usepackage{graphicx}
\usepackage{amscd}
\usepackage{amsmath}
\usepackage{amsfonts}
\usepackage{amssymb}
\theoremstyle{plain}

\newtheorem{cor}{Corollary}

\newtheorem{dfn}{Definition}

\newtheorem{lem}{Lemma}

\newtheorem{prop}{Proposition}

\newtheorem{thm}{Theorem}
\numberwithin{equation}{section}

\theoremstyle{definition}

\newtheorem{rmk}{Remark}


\newcommand{\lra}{{\longrightarrow}}




\newcommand{\II}{\begin{enumerate}}
\newcommand{\III}{\end{enumerate}}

\begin{document}

\title[Existence of well-filterifications of $T_0$ topological spaces]
{Existence of well-filterifications of $T_0$ topological spaces}

\author[G. Wu]{Wu Guohua}
\address{Division of Mathematical Sciences, School of Physical and
  	Mathematical Sciences\\
  	Nanyang Technological University\\
  	Singapore}
\email{guohua.wu@ntu.edu.sg}

\author[X. Xi]{Xi Xiaoyong}
\address{Division of Mathematical Sciences, School of Physical and
  	Mathematical Sciences\\
  	Nanyang Technological University\\
  	Singapore}
\email{xiaoyong.xi@ntu.edu.sg}

\author[X. Xu]{Xu Xiaoquan}
\address{School of Mathematics and Statistics\\
Minnan Normal University\\Fujian, Zhangzhou 363000, P.R. China}
 \email{xiqxu2002@163.com}

\author[D. Zhao]{Zhao Dongsheng}
\address{Mathematics and Mathematics Education\\
National Institute of Education\\
Nanyang Technological University, Singapore}
\email{dongsheng.zhao@nie.edu.sg}

\subjclass[2000]{06B35, 06B30, 54A05}
\keywords{directed complete poset; Scott topology; sober space; well-filtered space}

\begin{abstract} We prove that for every $T_0$ space $X$, there is a well-filtered space $W(X)$ and a continuous mapping $\eta_X: X\lra W(X)$, such that
for any well-filtered space $Y$ and any continuous mapping $f: X\lra Y$ there is a unique continuous mapping $\hat{f}: W(X)\lra Y$ such that $f=\hat{f}\circ \eta_X$. Such a space $W(X)$ will be called the well-filterification of $X$. This result gives a positive answer to one of the major open problems on well-filtered spaces. Another result on well-filtered spaces we will prove is that the product of two well-filtered spaces is well-filtered.
\end{abstract}

\maketitle

The three important topological properties for non-Hausdorff spaces are the sobriety, monotone convergence (or being d-space) and well-filteredness. It  has  been proved by different authors that every $T_0$ space has a sobrification and a d-completion \cite{comp}\cite{ersov-1999c}\cite{keimel-lawson}, equivalently, the subcategory of all sober spaces and that of all monotone convergent spaces are reflexive in the category of all $T_0$ spaces. However, it is still unknown whether the category of all well-filtered spaces is reflexive in the category of all $T_0$ spaces.
In this paper we give a positive answer to this problem. Our main strategy is to use the criteria for the existence of K-fication of $T_0$ spaces  suggested by Keimel and Lawson in \cite{keimel-lawson}. Another problem on well-filteredeness is whether the product of two well-filtered spaces is a well-filtered space.  We will also give a positive answer to this problem.

\section{K-fication}

Assume that a topological property, called K-property, is given. By \cite{keimel-lawson}, a K-fication of a $T_0$ space $X$ is a space $F(X)$ with K-property and a continuous mapping $\eta_X: X\lra F(X)$ which is universal among all continuous mappings from $X$ to  spaces with K-property: for any continuous mapping $g: X\lra Z$ where $Z$ is a space  with K-property, there is a unique continuous mapping $\hat{g}: F(X)\lra Z$ such that $g=\hat{g}\circ \eta_X$. If every $T_0$ space has a K-fication, then the category of all $T_0$ spaces with K-property is reflexive in the category of all $T_0$ spaces.

By Keimel and Lawson \cite{keimel-lawson}, if the K-property satisfies the following four conditions, then every $T_0$ space has a K-fication:

(K1) Every sober space has K-property.

(K2) If $X$ has K-property and $Y$ is homeomorphic to $X$, then $Y$ also has K-property.

(K3) If $\{X_i\}_{i\in I}$ is a family of subspaces of a sober space such that each $X_i$  has K-property, then the subspace $\bigcap_{i\in I}X_i$ also has K-property;

(K4) If $f: X\lra Y$ is a continuous mapping between sober spaces  $X$ and $Y$, then for any subspace $Z$ of $Y$ with K-property, $f^{-1}(Z)$  has K-property.

\vskip 0.5cm
For a $T_0$ space $(X, \tau)$, the specialization order on $X$, written $\le_{\tau}$ (or just $\le$), is define as $x\le_{\tau}y$ iff $x\in cl(\{y\})$, where $cl$ is
the  closure operator.

As in a general poset we shall use the following standard notations for any subset $A$ of a $T_0$ space $(X, \tau)$:

$$\uparrow_X\!A=\{y\in X: \mbox{there is an } x\in A, x\le_{\tau} y\}.$$

$$\downarrow_X\!\!A=\{y\in X: \mbox{there is an } x\in A, y\le_{\tau} x\}.$$

\smallskip

For any $x\in X$, $\uparrow_X\!x=\uparrow_X\!\{x\}$ and $\downarrow_X\!x=\downarrow_X\!\{x\}$.

\smallskip
The symbol $\uparrow_X\!A$ $(\downarrow_X\!\!A, \uparrow_X\!x, \downarrow_X\!x, resp.)$ will be simply written as
 $\uparrow\!\!A$ $(\downarrow\!\!A, \uparrow\!x, \downarrow\!x, resp.)$ if no ambiguous occurs.

By the definition of the specialization order on  $T_0$ space $X$, for any $x\in X$, $\downarrow\!\!x=cl(\{x\})$, the closure of $\{x\}$. Hence $\downarrow\!\!x$ is a closed set.

\begin{rmk}\label{upset in subspaces}
Let $X_1$ be a subspace of a $T_0$ space $(X, \tau)$. For any $x\in X_1$, the closure $cl_{X_1}\{x\}$ of $\{x\}$ in $X_1$ equals $X_1\cap cl_{X}\{x\}$, where $cl_X\{x\}$ is the closure of $\{x\}$ in $X$. Hence for any $x, y\in X_1$, $x\leq y$ holds in $X_1$ if and only if $x\leq y$ holds in $X$. In other words, the specialization order on $X_1$ is the restriction of $\leq_{\tau}$ on $X_1$.

Hence for any $x\in X_1$,  we have $\downarrow_{X_1}\!\!x=(\downarrow_X\!\!x)\cap X_1$ ($\uparrow_{X_1}\!\!x=(\uparrow_X\!\!x)\cap X_1$, resp.).

In general, for any $A\subseteq X_1$,  $\downarrow_{X_1}\!\!A=(\downarrow_X\!\!A)\cap X_1$ ($\uparrow_{X_1}\!\!A=(\uparrow_X\!\!A)\cap X_1$, resp.).

\end{rmk}

A subset $A$ of a space $X$ is {\sl irreducible} if for any closed sets $F_1, F_2$ of $X$, $A\subseteq F_1\cup F_2$ implies $A\subseteq F_1$ or $A\subseteq F_2$.
A $T_0$ space $X$ is called sober if for any nonempty irreducible closed set $F$, $F=cl(\{x\})$ for some $x\in X$.

The sobriety satisfies all conditions (K1)-(K4), hence the so-called soberification exists for each $T_0$ space \cite{comp}.

For any subset $A$ of a space $X$, the saturation of $A$, denoted by $sat(A)$, is defined to be
$$ sat(A)=\bigcap\{U\in\mathcal{O}(X): A\subseteq U\},$$
where $\mathcal{O}(X)$ is the set of all open sets of $X$.

A subset $A$ of a space $X$ is called saturated if $A=sat(A)$.

The saturation of any subset is a saturated set, and the saturation of every compact set is compact\cite{comp}\cite{Jean-2013}.

It is a standard fact that for any subset $A$ of a space $X$\cite{comp}\cite{Jean-2013},
$$ sat(A)=\uparrow\!\!A.$$

\begin{dfn}
A $T_0$  space $X$ is called {\sl  well-filtered}  if for any open set $U$ and any filtered family  $\mathcal{F}$ of saturated compact subsets of $X$ (for any $F_1, F_2\in\mathcal{F}$, there exists $F_3\in \mathcal{F}$ such that $F_3\subseteq F_1\cap F_2$), $\bigcap\mathcal{F}\subseteq U$ implies $F\subseteq U$ for some $F\in\mathcal{F}$.
\end{dfn}

\begin{rmk}\label{remark on sober spaces}

(1) Every sober space is well-filtered, and a locally compact space is sober iff it is well-filtered \cite{comp}\cite{Jean-2013}.

(2) A $T_0$  space $X$ is called a  monotone convergent space (or d-space), if for any directed subset $D$ of $X$ (with respect to the specialization order on $X$), $\bigvee\,D$ exists and $D$ converges (as a net ) to $\bigvee\,D$.  Every well-filtered space is  monotone convergent. The monotone convergence is a topological property satisfying all conditions (K1)-(K4), thus the d-completion exists for each $T_0$ spaces \cite{keimel-lawson}.

\end{rmk}

In this paper we prove that the well-filtered property satisfies all the conditions (K1)-(K4), hence the well-filterification exists for every $T_0$ space.

\begin{rmk} \label{remarks on well-filtered spaces}

(1) If a space $X$ is well-filtered and $\{F_i\}_{i\in I}$ is a filtered family of (non-empty) saturated compact sets, then $\bigcap\{F_i: i\in I\}$ is a (non-empty) saturated compact set \cite{Jean-2013}\cite{xi-zhao-MSCS-well-filtered}.

(2) For any saturated compact set $E$ in a $T_0$ space, $E=\uparrow\!C$, where $C$ is a compact set and an anti-chain (with respect to the specialization order). In other words, every element in $E$ is above some minimal element(s) of $E$. This claim follows from the compactness of $E$ and the Maximal Chain Principle.

\end{rmk}

For more about sober spaces, well-filtered spaces, d-spaces  and saturated sets, we refer the reader to \cite{comp}\cite{Jean-2013}\cite{Klause-Heckmann}\cite{jia-yung-2016}\cite{Xi-Lawson-2017}\cite{xi-zhao-MSCS-well-filtered}.

\section{Existence of well-filterification}

We now verify that the well-filteredness satisfies all the conditions (K1)-(K4) given in \cite{keimel-lawson}. The condition (K1) holds due to the fact that every sober space is well-filtered (see  Remark \ref{remark on sober spaces} (1)). The condition (K2) holds because the well-filteredness is a topological property.
Thus we only need to verify the condition (K3) and (K4).

In what follows, all topological spaces to be considered are assumed to be $T_0$.

\begin{rmk}\label{fund remark}
If $A_1, A_2$ and  $A_3$ are subsets of a space $X$ such that $\uparrow\!\!A_3\subseteq \uparrow\!\!A_1\cap \uparrow\!\!A_2$, then for any lower set $F\subseteq X$ (i.e. $F=\downarrow\!\!F$), $$\uparrow\!\!(F\cap A_3)\subseteq \uparrow\!\!(F\cap A_1)\cap \uparrow\!\!(F\cap A_2).$$
In fact, let $y\in F\cap A_3$. Then $k_1\le y$ for some $k_1\in A_1$, and $k_2\le y$ for some $k_2\in A_2$.
Since $y\in F=\downarrow\!F$, we have that $k_1, k_2\in F$. It follows that $k_1\in F\cap A_1, k_2\in F\cap A_2$.
Hence $y\in \uparrow\!(F\cap\,A_1)\cap \uparrow\!(F\cap\,A_2).$ Therefore $ F\cap A_3\subseteq \uparrow\!(F\cap A_1)$ and $F\cap A_3\subseteq \uparrow\!(F\cap\,A_2),$ which then imply that $\uparrow\!\!(F\cap A_3)\subseteq \uparrow\!\!(F\cap A_1)$ and $\uparrow\!\!(F\cap A_3)\subseteq\uparrow\!\!(F\cap A_2)$, or
$$\uparrow\!\!(F\cap A_3)\subseteq \uparrow\!\!(F\cap A_1)\cap\uparrow\!\!(F\cap A_2).$$
\end{rmk}

\begin{rmk}\label{immage of filtered families}
Let $f: X\lra Y$ be a continuous mapping  between topological spaces.

(1) For any subset $A\subseteq X$, $f(\uparrow_X\!\!A)\subseteq \uparrow_Y\!\!f(A)$.

(2) If $A, B, C\subseteq X$ and $A\subseteq \uparrow_X\!\! B\cap\uparrow_X\!\!C$, then
$$\uparrow_Y\!\!f(A)\subseteq \uparrow_Y\!\!f(B)\cap\uparrow_Y\!\!f(C).$$

(3) If $\{\uparrow_X\!\!H_i\}_{i\in I}$ is a filtered family of subsets of $X$, then
$$\{\uparrow_Y\!\!f(H_i): i\in I\}$$
is a filtered family of subsets of $Y$.

\end{rmk}

The lemma below will be used to prove several other results.

\begin{lem}\label{fund lemma}
Let $X$ be a well-filtered space. Then for any filtered family $\{K_i\}_{i\in I}$ of nonempty compact saturated subsets of $X$, we have

(1) $\bigcap_{i\in I}K_i=\uparrow\!C$, where $C$ is a nonempty anti-chain;

(2) for each $a\in C$, $\bigcap_{i\in I}\uparrow\!(\downarrow\!a\cap K_i)=\uparrow\! a$.
\end{lem}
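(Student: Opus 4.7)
Part (1) will be a direct reading of Remark~\ref{remarks on well-filtered spaces}: item~(1) there guarantees that $\bigcap_{i\in I} K_i$ is a non-empty saturated compact subset of $X$, and item~(2) then presents any such set as $\uparrow\!C$ with $C$ its (necessarily non-empty) anti-chain of minimal elements, so no additional work is needed.

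For part (2) I would first dispose of the easy inclusion $\uparrow\!a\subseteq \bigcap_{i\in I}\uparrow\!(\downarrow\!a\cap K_i)$: since $a\in C\subseteq \bigcap_{i\in I} K_i$, we have $a\in \downarrow\!a\cap K_i$ for every $i$, and the inclusion is immediate. For the harder direction I would use that $\uparrow\!a=sat(\{a\})=\bigcap\{V\in\mathcal{O}(X):a\in V\}$, which reduces the problem to showing that for each open $V$ with $a\in V$ there exists some $i\in I$ with $\uparrow\!(\downarrow\!a\cap K_i)\subseteq V$.

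The key move is to enlarge $V$ to the open set
\[
  W:=V\cup (X\setminus \downarrow\!a)
\]
and apply the well-filteredness of $X$ to the original filtered family $\{K_i\}$. One has to verify $\bigcap_{i\in I} K_i\subseteq W$: for $x\in\uparrow\!C$, either $x\notin\downarrow\!a$ and so $x\in W$ by definition, or $x\le a$ holds and then, combined with $c\le x$ for some $c\in C$, we obtain $c\le a$, forcing $c=a$ by the anti-chain property of $C$, whence $x=a\in V\subseteq W$. Well-filteredness then produces some $i_0\in I$ with $K_{i_0}\subseteq W$; intersecting with the lower set $\downarrow\!a$ discards the piece in $X\setminus\downarrow\!a$ and yields $\downarrow\!a\cap K_{i_0}\subseteq V$, and because $V$ is open (hence an upper set in the specialization order) this upgrades to $\uparrow\!(\downarrow\!a\cap K_{i_0})\subseteq V$, as required. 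Intersecting over all such $V$ gives $\bigcap_{i\in I}\uparrow\!(\downarrow\!a\cap K_i)\subseteq \uparrow\!a$.

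The only genuine obstacle is spotting the right open set to feed into well-filteredness. The choice $W=V\cup(X\setminus\downarrow\!a)$ is exactly what converts the anti-chain condition on $C$ into the containment $\bigcap_{i\in I}K_i\subseteq W$; once this is seen, the remainder is a direct bookkeeping argument using only that $\downarrow\!a=cl(\{a\})$ is closed and that open sets in a $T_0$ space are upper in the specialization order.
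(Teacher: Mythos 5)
Your proposal is correct, and part (2) follows a genuinely different route from the paper's. The paper bootstraps part (1): it first checks (via Remark~\ref{fund remark} and the closedness of $\downarrow\!a$) that $\{\uparrow\!(\downarrow\!a\cap K_i)\}_{i\in I}$ is itself a filtered family of nonempty compact saturated sets, applies (1) to write its intersection as $\uparrow\!\hat{C}$ for an anti-chain $\hat{C}$, shows $a\in\hat{C}$, and then rules out a second element $a'\in\hat{C}$ by a contradiction argument that feeds the open set $(X-\downarrow\!a)\cup(X-\downarrow\!a')$ into well-filteredness applied to the \emph{derived} family. You instead fix an arbitrary open $V\ni a$, enlarge it to $W=V\cup(X\setminus\downarrow\!a)$, and apply well-filteredness once to the \emph{original} family $\{K_i\}$; the verification $\bigcap_i K_i\subseteq W$ is exactly where the anti-chain property of $C$ enters (the only point of $\uparrow\!C$ below $a$ is $a$ itself), and the rest is the bookkeeping you describe, ending with $\bigcap_i\uparrow\!(\downarrow\!a\cap K_i)\subseteq sat(\{a\})=\uparrow\!a$. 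Your argument is shorter and more elementary: it never needs the derived family to be filtered, compact, or saturated, and it avoids the contradiction step entirely. What the paper's version buys is that its template (produce an anti-chain, then shrink it to a singleton by separating two incomparable points with an open set) is reused almost verbatim in the proof of Lemma~\ref{subspace}, so the authors get some economy of ideas across the paper; but as a standalone proof of this lemma, your route is at least as clean.
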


\begin{proof}

(1) This follows from (1)(2) of Remark \ref{remarks on well-filtered spaces}.

(2) For each $a\in C$, $a\in C\subseteq \uparrow\! C=\bigcap_{i\in I}K_i$, so $a\in K_i$ for each $i\in I$. In particular, $\downarrow\!a\cap K_i\not=\emptyset$ for each $i\in I$.

Now, by Remark \ref{fund remark} and that $\downarrow\!\!a$ $(=cl(\{a\}))$ is closed,  $\{\uparrow\!(\downarrow\!a\cap K_i)\}_{i\in I}$ is a filtered family of nonempty compact saturated sets. Since $X$ is well-filtered, applying  (1) to this new family of saturated compact sets,  there is a nonempty anti-chain $\hat{C}$ such that
$$\bigcap_{i\in I}\uparrow\!(\downarrow\!a\cap K_i)=\uparrow\!\hat{C}.$$

Note that $a\in \downarrow\!a\cap K_i (i\in I)$, thus
$$a\in\bigcap_{i\in I}\uparrow\!(\downarrow\!a\cap K_i)=\uparrow\!\hat{C}.$$
Hence $\downarrow\!a\cap \hat{C}\not=\emptyset$. Take $t\in \downarrow\!a\cap \hat{C}$. Then
$$t\in\hat{C}\subseteq \uparrow\!\hat{C}=\bigcap_{i\in I}\uparrow\!(\downarrow\!a\cap K_i)\subseteq \bigcap_{i\in I}\uparrow\!\!K_i=\bigcap_{i\in I}K_i=\uparrow\!C.$$

So there is $c\in C$ such that $c\le t$. Since $t\le a$, we have $c\le a$, implying $a=c$ because $C$ is an anti-chain and $a, c\in C$. Hence $a=c=t\in \hat{C}$.

We now show that $\hat{C}=\{a\}$. Assume, on the contrary that there is $a'\in \hat{C}-\{a\}$. Then, as $\hat{C}$ is an anti-chain, we have
$$\uparrow\!\hat{C}\subseteq (X-\downarrow\!a)\cup (X-\downarrow\!a'), $$
here $(X-\downarrow\!a)\cup (X-\downarrow\!a')=(X-cl\{a\})\cup (X-cl\{a'\})$ is an open set.

Since $X$ is well-filtered, $\{\uparrow\!\!(\downarrow\!a\cap K_i): i\in I\}$ is a filtered family of compact saturated sets of $X$ and
$$ \bigcap_{i\in I}\uparrow\!(\downarrow\!a\cap K_i)=\uparrow\!\hat{C}\subseteq (X-\downarrow\!a)\cup (X-\downarrow\!a'),$$
there is $i_0\in I$ such that $\downarrow\!a\cap K_{i_0}\subseteq (X-\downarrow\!a)\cup (X-\downarrow\!a').$
Since $\downarrow\!a\cap K_{i_0}$ and $X-\downarrow\!a$ are disjoint, $\downarrow\!a\cap K_{i_0}\subseteq X-\downarrow\!a'$, implying $(\downarrow\!a\cap K_{i_0})\cap \downarrow\!a'=\emptyset$. However $a'\in \uparrow\!(\downarrow\!a\cap K_{i_0})$, so $\downarrow\!a'\cap (\downarrow\!a\cap K_{i_0})\not=\emptyset$. This contradiction shows that $\hat{C}=\{a\}$, thus

$$\bigcap_{i\in I}\uparrow\!(\downarrow\!a\cap K_i)=\uparrow\!\hat{C}=\uparrow\!a,$$
as desired.

\end{proof}

\begin{rmk}\label{inclusion of saturation}

(1) For any open set $U$ of a space $X$, $U=sat(U)=\uparrow\!\!U$, and for any closed set $F$ of $X$, $F=\downarrow\!\!F$ (see \cite{comp}\cite{Jean-2013}).

(2) Let $A$ and $B$  be subsets of a space $X$. Then $sat(A) \subseteq sat(B)$ if and only if every open neighbourhood $U$ of $B$ (i.e. $B\subseteq U$) contains $A$.
\end{rmk}

The following result is a direct corollary of the general Topological Rudin's Lemma given by Keimel and Heckmann in \cite{Klause-Heckmann} (see Lemma 3. 1 of \cite{Klause-Heckmann}).

\begin{lem}\label{rutin'slemma}
Let $X$ be a topological space and $\mathcal{F}$ a filtered family of compact subsets of $X$ (for any $F_1, F_2\in\mathcal{F}$, there is $F\in\mathcal{F}$ such that
$F\subseteq \uparrow\!\!F_1\cap \uparrow\!\!F_2$). Any closed set $C \subseteq X$  that
meets all members of $\mathcal{F}$  contains an irreducible closed subset $A$ that still meets all
members of $\mathcal{F}$.

In addition, this irreducible closed set $A$ can be taken as a minimal one: if $A'\subseteq A$ is a proper closed subset of $A$, then $A'\cap F=\emptyset$ for some $F\in\mathcal{F}$.
\end{lem}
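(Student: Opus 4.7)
The plan is a standard Zorn's lemma argument that directly produces the stronger \emph{minimal} irreducible subset, since minimality is exactly what will buy us irreducibility. Let
$$\mathcal{A}=\{A\subseteq C: A\text{ is closed in }X\text{ and }A\cap F\neq\emptyset\text{ for every }F\in\mathcal{F}\}.$$
Then $C\in\mathcal{A}$, so $\mathcal{A}$ is nonempty. Order $\mathcal{A}$ by inclusion. First I would apply Zorn's lemma to extract a minimal element of $\mathcal{A}$.

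To check the hypothesis of Zorn, let $\{A_\lambda\}_{\lambda\in\Lambda}$ be a descending chain in $\mathcal{A}$ and set $A^*=\bigcap_{\lambda}A_\lambda$, which is closed and contained in $C$. For any fixed $F\in\mathcal{F}$, the sets $A_\lambda\cap F$ form a descending chain of nonempty closed subsets of $F$; since $F$ is compact, this chain has the finite intersection property and hence $A^*\cap F=\bigcap_\lambda (A_\lambda\cap F)$ is nonempty. Thus $A^*\in\mathcal{A}$, and Zorn gives a minimal $A\in\mathcal{A}$.

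It remains to show that such a minimal $A$ is irreducible. Suppose, towards a contradiction, that $A=A_1\cup A_2$ with $A_1,A_2$ proper closed subsets of $A$. By minimality, neither $A_1$ nor $A_2$ belongs to $\mathcal{A}$, so there exist $F_1,F_2\in\mathcal{F}$ with $A_1\cap F_1=\emptyset$ and $A_2\cap F_2=\emptyset$. By the filteredness assumption, pick $F_3\in\mathcal{F}$ with $F_3\subseteq\,\uparrow\!F_1\cap\uparrow\!F_2$. Since $A\in\mathcal{A}$, we may choose $z\in A\cap F_3$, and we may assume without loss of generality that $z\in A_1$. From $z\in F_3\subseteq\,\uparrow\!F_1$ there is some $y\in F_1$ with $y\leq z$. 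Because $A_1$ is closed in $X$, it is a lower set in the specialization order (Remark~\ref{inclusion of saturation}(1)), so $y\in A_1$; this contradicts $A_1\cap F_1=\emptyset$. Hence $A$ is irreducible, and by construction it is minimal among closed subsets of $C$ meeting every member of $\mathcal{F}$.

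The main conceptual step — and the only nonroutine one — is the irreducibility argument, where we must combine three inputs at once: the filteredness of $\mathcal{F}$ (to produce $F_3$), the specialization-closure property of closed sets (to push $y$ into $A_1$), and the minimality coming from Zorn (to supply the witnesses $F_1,F_2$). The verification of Zorn's chain hypothesis is straightforward from compactness, so I expect no serious obstacle there.
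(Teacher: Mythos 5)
Your proof is correct. Note that the paper does not prove this lemma at all: it invokes it as a direct consequence of Lemma 3.1 of Heckmann and Keimel (the topological Rudin lemma). What you have written is essentially the standard Zorn's lemma proof of that cited result, and it is complete: the compactness/finite-intersection argument for chains, the extraction of a minimal element, and the derivation of irreducibility from minimality via the filteredness of $\mathcal{F}$ together with the fact that closed sets are lower sets in the specialization order are all handled correctly, and your construction yields the minimal irreducible set directly, so both assertions of the lemma follow at once.
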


\begin{lem}\label{subspace}
Let $W$ be a subspace of a sober space $X$. Assume that  $\{K_i\}_{i\in I}$ is a filtered family of compact saturated subsets of $W$ and $U$ is an open set of $X$ such that (i) $\bigcap_{i\in I}K_i\subseteq U$ and (ii) $K_i\not\subseteq U (\forall i\in I)$. Then there is $e\in (X-W)\cap (X-U)$ such that
$$\bigcap_{i\in I}\uparrow\!(\downarrow\!e\cap K_i)=\uparrow\! e.$$
\end{lem}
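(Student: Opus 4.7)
The plan is to apply the Topological Rudin Lemma (\lemref{rutin'slemma}) to produce the candidate point $e$ and then exploit the minimality clause to obtain the displayed equality. First note that each $K_i$, being compact as a subspace of $W$, is compact in $X$, and the family $\{K_i\}_{i\in I}$ is filtered in the sense required by \lemref{rutin'slemma}: whenever $K_{i_1},K_{i_2}\in\{K_i\}$ there is $K_{i_3}\subseteq K_{i_1}\cap K_{i_2}\subseteq\uparrow_X\!K_{i_1}\cap\uparrow_X\!K_{i_2}$. By hypothesis (ii), the closed set $C=X\setminus U$ meets every $K_i$. So \lemref{rutin'slemma} yields a minimal irreducible closed set $A\subseteq X\setminus U$ which still meets every $K_i$. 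Since $X$ is sober, $A=\downarrow\!e$ for a unique $e\in X$, and $e\in A\subseteq X\setminus U$.

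Next I would verify that $e\notin W$. Suppose on the contrary that $e\in W$. For each $i\in I$ pick $t_i\in \downarrow\!e\cap K_i$, which is nonempty since $A=\downarrow\!e$ meets $K_i$. Then $t_i\in K_i\subseteq W$ and $t_i\le e$, and by Remark \ref{upset in subspaces} the specialization order on $W$ is the restriction of that on $X$. As $K_i$ is saturated in $W$, it is upward closed in $W$, so $e\in K_i$. Thus $e\in\bigcap_{i\in I}K_i\subseteq U$ by (i), contradicting $e\in X\setminus U$. Hence $e\in(X\setminus W)\cap(X\setminus U)$.

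For the identity $\bigcap_{i\in I}\uparrow\!(\downarrow\!e\cap K_i)=\uparrow\!e$, the inclusion $\uparrow\!e\subseteq\bigcap_{i\in I}\uparrow\!(\downarrow\!e\cap K_i)$ is immediate: for each $i$, any $t\in\downarrow\!e\cap K_i$ satisfies $t\le e$, so $\uparrow\!e\subseteq\uparrow\!t\subseteq\uparrow\!(\downarrow\!e\cap K_i)$. For the reverse inclusion, suppose $y\in\bigcap_{i\in I}\uparrow\!(\downarrow\!e\cap K_i)$ but $e\not\le y$, i.e.\ $e\notin\downarrow\!y$. Set $A'=\downarrow\!e\cap\downarrow\!y$; this is closed in $X$ and a proper closed subset of $A=\downarrow\!e$ since $e\in A\setminus A'$. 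For each $i\in I$ choose $s\in\downarrow\!e\cap K_i$ with $s\le y$; then $s\in\downarrow\!e\cap\downarrow\!y\cap K_i=A'\cap K_i$, so $A'$ meets every $K_i$. This contradicts the minimality of $A$ stated in \lemref{rutin'slemma}, so $e\le y$, i.e.\ $y\in\uparrow\!e$.

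The main obstacle I expect is making sure the filteredness and compactness hypotheses of Rudin's Lemma are correctly transferred from the subspace $W$ to the ambient space $X$, and arguing cleanly that $e\notin W$; both go through by using that saturation in $W$ is upward closure in the restricted specialization order. Everything else is the standard minimal-irreducible argument that drives Rudin's Lemma.
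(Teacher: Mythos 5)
Your proof is correct, and the first two-thirds of it (producing $e$ via the Topological Rudin Lemma plus sobriety, and the saturation argument showing $e\notin W$) coincides with the paper's. Where you genuinely diverge is in the reverse inclusion $\bigcap_{i\in I}\uparrow\!(\downarrow\!e\cap K_i)\subseteq\uparrow\!e$. The paper proves it by showing that every open neighbourhood $V$ of $e$ contains some $\downarrow\!e\cap K_i$: it observes that $\{\downarrow\!e\cap K_i\}_{i\in I}$ is again a filtered family of compact sets, applies \lemref{rutin'slemma} a second time inside $V^c$, uses sobriety to write the resulting minimal irreducible closed set as $\downarrow\!e'$, forces $e=e'$ by two minimality comparisons, and then concludes via Remark \ref{inclusion of saturation}(2). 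You instead take $y$ in the intersection with $e\not\le y$ and exhibit $A'=\downarrow\!e\cap\downarrow\!y$ as a proper closed subset of $\downarrow\!e$ that still meets every $K_i$ (a witness $s\in\downarrow\!e\cap K_i$ with $s\le y$ lies in $A'\cap K_i$), contradicting the minimality clause of the \emph{first} application of Rudin's Lemma. Your route is shorter and more elementary: it needs neither the compactness and filteredness of the auxiliary family, nor a second appeal to sobriety, nor the saturation criterion. The paper's version yields as a by-product the explicit neighbourhood statement (for every open $V\ni e$ some $\downarrow\!e\cap K_i\subseteq V$), but since that also follows at once from the proved equality together with well-filteredness of the sober space $X$ --- which is exactly how the paper re-derives it later in verifying (K4) --- nothing is lost by your approach.
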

\begin{proof} Note that each $K_i$ is also a compact subset of $X$. In addition, as $\{K_i: i\in I\}$ is a filtered family of compacts sets in the subspace $W$, it is also a filtered family of compact sets in $X$.

Now the closed set $U^c=X-U$ has a nonempty intersection with each $K_i (i\in I)$. By Lemma \ref{rutin'slemma}, there is a
minimal irreducible closed set $F\subseteq U^c$ such that $F\cap K_i\not=\emptyset$ ($i\in I)$. Since $X$ is sober, $F=cl\{e\}=\downarrow\!e$ for some $e\in X$.

Claim 1. $e\not\in W$.
As a mater of fact, if $e\in W$, then as $\downarrow\!e\cap K_i\not=\emptyset$, there is $k\in \downarrow\!e\cap K_i$ such that $k\le e$ holds in $X$. Therefore $k\le e$ holds in the subspace $W$ as well by Remark \ref{upset in subspaces}. Hence $e\in \uparrow_W\!\!K_i=K_i$ because $K_i$  is saturated in $W$.  It follows that  $e\in K_i$ for each $i\in I$. Then
$$e\in\bigcap_{i\in I}K_i\subseteq U,$$
which contradicts the assumption that $e\in \downarrow\!e=F\subseteq U^c$.

Claim 2. $\bigcap_{i\in I}\uparrow\!(\downarrow\!e\cap K_i)=\uparrow\!e.$

Clearly $\uparrow\!e\subseteq \bigcap_{i\in I}\uparrow\!(\downarrow\!e\cap K_i)$ holds. Note that, as an intersection of saturated sets $\uparrow\!(\downarrow\!e\cap K_i)$($i\in I$), $\bigcap_{i\in I}\uparrow\!(\downarrow\!e\cap K_i)$ is a saturated set. Hence, by (2) of Remark \ref{inclusion of saturation}, in order to show that  $\bigcap_{i\in I}\uparrow\!(\downarrow\!e\cap K_i)\subseteq \uparrow\!e$ holds, we only need to verify that every open neighbourhood of $e$ must contain  $\bigcap_{i\in I}\uparrow\!(\downarrow\!e\cap K_i)$ (note that $\uparrow\!\!e=sat(\{e\})$).

Let $V$ be any open set of $X$ containing $e$. As $\downarrow\!\!e$ is a closed set, each $\downarrow\!\!e\cap K_i$ is compact (the intersection of a compact set and a closed set is compact). In addition, by Remark \ref{fund remark}, we have that $\{\downarrow\!e\cap K_i: i\in I\}$ is a filtered family of sets in $X$.

If $V^c\cap\downarrow\!e\cap K_i\not=\emptyset $ for all $i\in I$, then by Lemma \ref{rutin'slemma}, there is a minimal irreducible closed set $G$ of $X$ such that $G\subseteq V^c$ and $G\cap\downarrow\!e\cap K_i\not=\emptyset $ for all $i\in I$. Then $G=\downarrow\!e'$ for some $e'\in X$ ($e'\in G\subseteq V^c$) because $X$ is sober. Now $\downarrow\!e'\cap\downarrow\!e\cap K_i\not=\emptyset $ for all $i\in I$, so $\downarrow\!e'\cap\downarrow\!e=\downarrow\!e$ due to the minimality of $\downarrow\!e$.

On the other hand, $(\downarrow\!e'\cap\downarrow\!e)\cap \downarrow\!e\cap K_i\not=\emptyset $ for all $i\in I$, so $\downarrow\!e'\cap\downarrow\!e=\downarrow\!e'$ due to the minimality of $\downarrow\!e'$. It thus follows that $e=e'$. But $e'\not\in V$ and $e\in V$, this contradiction shows that there is $i\in I$ such that $\downarrow\!e\cap K_i\subseteq V$, hence $\bigcap_{i\in I}\uparrow\!(\downarrow\!e\cap K_i)\subseteq V$. All these then show that $\bigcap_{i\in I}\uparrow\!(\downarrow\!e\cap K_i)\subseteq \uparrow\!e$.

Therefore $\bigcap_{i\in I}\uparrow(\downarrow\!e\cap K_i)= \uparrow\!e$.

The combination of Claim 1 and Claim 2 completes the proof.
\end{proof}

Now we prove that the well-filteredness satisfies condition (K4).
\begin{lem}
Let $f: (X, \tau) \lra (Y, \mu)$ be a continuous mapping between sober spaces. Then for any well-filtered subspace $Z$ of $Y$, $f^{-1}(Z)$ is a well-filtered subspace of $X$.
\end{lem}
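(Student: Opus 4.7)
The plan is to argue by contradiction, combining \lemref{subspace} (applied to $W:=f^{-1}(Z)$ as a subspace of the sober space $X$) with the well-filteredness of $Z$. Suppose $W$ is not well-filtered: there exist a filtered family $\{K_i\}_{i\in I}$ of nonempty compact saturated subsets of $W$ and an open set $V$ of $X$ with $\bigcap_{i\in I}K_i\subseteq V\cap W$ while $K_i\not\subseteq V$ for every $i\in I$. Then \lemref{subspace} yields a point $e\in (X\setminus W)\cap (X\setminus V)$ such that $\bigcap_{i\in I}\uparrow\!(\downarrow\!e\cap K_i)=\uparrow\!e$ in $X$; moreover, its construction via \lemref{rutin'slemma} shows that $\downarrow\!e$ is a minimal irreducible closed subset of $X\setminus V$ meeting every $K_i$. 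In particular $f(e)\in Y\setminus Z$.

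I would transfer the filtration to $Y$ by setting $L_i:=\uparrow_Z f(\downarrow\!e\cap K_i)=\uparrow_Y f(\downarrow\!e\cap K_i)\cap Z$. Since $\downarrow\!e\cap K_i$ is nonempty and compact (a closed subset of $X$ intersected with a compact subset of $W$), its continuous image sits inside $Z$ and saturates to a nonempty compact saturated subset of $Z$; by Remarks \ref{fund remark} and \ref{immage of filtered families}(3), $\{L_i\}_{i\in I}$ is filtered. The crux of the argument is the containment
\[
\bigcap_{i\in I}L_i\;\subseteq\;Y\setminus\downarrow\!f(e).
\]
To prove it, fix $z\in\bigcap_{i\in I}L_i$. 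First I would show $z\in\uparrow\!f(e)$ by checking that $z$ belongs to every open neighbourhood $V'$ of $f(e)$ in $Y$. For such $V'$, the set $\downarrow\!e\cap f^{-1}(Y\setminus V')$ is closed in $X$ and a proper subset of $\downarrow\!e$ (it omits $e$, since $f(e)\in V'$), so the minimality clause of \lemref{rutin'slemma} produces some $i\in I$ with $\downarrow\!e\cap f^{-1}(Y\setminus V')\cap K_i=\emptyset$; equivalently $f(\downarrow\!e\cap K_i)\subseteq V'$, hence $L_i\subseteq V'$, and in particular $z\in V'$. Thus $z\geq f(e)$; but $z\in Z$ while $f(e)\notin Z$, so $z\neq f(e)$, and antisymmetry of the specialization order on the $T_0$ space $Y$ then rules out $z\leq f(e)$, giving $z\notin\downarrow\!f(e)$.

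With that containment in hand the contradiction is immediate: $(Y\setminus\downarrow\!f(e))\cap Z$ is an open subset of $Z$ containing $\bigcap_{i\in I}L_i$, so the well-filteredness of $Z$ produces some $j\in I$ with $L_j\subseteq Y\setminus\downarrow\!f(e)$. However, picking any $k\in\downarrow\!e\cap K_j$ (nonempty by the construction of $e$), continuity of $f$ gives $f(k)\leq f(e)$, and trivially $f(k)\in L_j$; hence $f(k)\in L_j\cap\downarrow\!f(e)$, contradicting $L_j\subseteq Y\setminus\downarrow\!f(e)$. The principal obstacle in this plan is the key containment above: one has to push the minimality property from the topological Rudin's Lemma through the map $f$, which is precisely why the minimality clause in \lemref{rutin'slemma} is indispensable; the remaining bookkeeping on filteredness, compactness, and behaviour of saturation under $f$ and under intersection with $Z$ is routine verification using Remarks \ref{fund remark}--\ref{immage of filtered families}.
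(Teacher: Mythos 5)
Your argument is correct, and its skeleton is the same as the paper's: invoke \lemref{subspace} with $W=f^{-1}(Z)$ to produce $e\notin f^{-1}(Z)$, push the filtered family $\{\downarrow\!e\cap K_i\}_{i\in I}$ through $f$ into the subspace $Z$, apply well-filteredness of $Z$ to the open set $Z\setminus\downarrow_Y\!f(e)$, and contradict by mapping a point of $\downarrow\!e\cap K_j$ forward. Where you genuinely diverge is in the key step that every open $V'\ni f(e)$ absorbs $f(\downarrow\!e\cap K_i)$ for some $i$. The paper gets this by applying well-filteredness of the sober space $X$ to the filtered family $\{\uparrow\!(\downarrow\!e\cap K_i)\}_{i\in I}$, whose intersection $\uparrow\!e$ lies in $f^{-1}(V')$ --- an argument that uses only the \emph{stated} conclusion of \lemref{subspace}. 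You instead apply the minimality clause of \lemref{rutin'slemma} to the proper closed subset $\downarrow\!e\cap f^{-1}(Y\setminus V')$ of $\downarrow\!e$. That works, and it is arguably more economical (no second appeal to well-filteredness of $X$) and makes visible exactly where minimality is needed; but note that it depends on the fact that $\downarrow\!e$ is a \emph{minimal} irreducible closed subset of $U^c$ meeting all the $K_i$, which is established inside the proof of \lemref{subspace} yet not recorded in its statement. To make your version self-contained you would either need to strengthen the statement of \lemref{subspace} to include that minimality, or replace this one step by the paper's well-filteredness argument; everything else in your write-up (nonemptiness, compactness and saturation of the $L_i$, filteredness, and the antisymmetry step showing $z\notin\downarrow\!f(e)$) checks out.
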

\begin{proof}
Let $\{K_i\}_{i\in I}\subseteq f^{-1}(Z)$ be a filtered family of compact saturated subsets of $f^{-1}(Z)$ and $\bigcap\{K_i: i\in I\}\subseteq U$ with $U$ an open set of $X$. We show that  $K_i\subseteq U$ holds for some $i\in I$.

Assume that $K_i \not\subseteq U$ for every $i\in I$. Then by Lemma \ref{subspace}, there is $e\in (X-f^{-1}(Z))\cap (X-U)$
such that

$$\bigcap_{i\in I}\uparrow\!\!(\downarrow\!\!e\cap K_i)=\uparrow\!\! e.$$

Now we verify that the following equation holds in $Y$:
$$\bigcap_{i\in I}\uparrow_Y\!\! f(\downarrow\!\!e\cap K_i)=\uparrow_Y\!\! f(e).$$

Note that every continuous mapping preserves the specialization order. Hence it follows from (2) of Remark \ref{immage of filtered families} easily that
$$\bigcap_{i\in I}\uparrow_Y\!f(\downarrow\!e\cap K_i)\supseteq\uparrow_Y\! f(e).$$

Now let $V\subseteq Y$ be open and $f(e)\in V$. Then $e\in f^{-1}(V)$, and
$$\bigcap_{i\in I}\uparrow\!(\downarrow\!e\cap K_i)=\uparrow\! e\subseteq f^{-1}(V).$$
As $X$ is well-filtered (every sober space is well-filtered), there exists $i_0\in I$ such that
$$ \downarrow\!e\cap K_{i_0}\subseteq f^{-1}(V),$$
implying $f(\downarrow\!e\cap K_{i_0})\subseteq V.$ So
 $$\bigcap_{i\in I}\uparrow_Y\! f(\downarrow\!e\cap K_i)\subseteq \uparrow_Y\! f(\downarrow\!e\cap K_{i_0})\subseteq\uparrow_Y\!\! V=V.$$

By (2) of Remark \ref{inclusion of saturation},
$$\bigcap_{i\in I}\uparrow_Y\! f(\downarrow\!e\cap K_i)\subseteq \uparrow_Y\!f(e).$$

Therefore
$$\bigcap_{i\in I}\uparrow_Y\! f(\downarrow\!e\cap K_i)= \uparrow_Y\!f(e).$$

Since $e\in X - f^{-1}(Z)$, $f(e)\not\in Z$ and hence
$$\bigcap_{i\in I}(\uparrow_Y\! f(\downarrow\!e\cap K_i))\cap Z= \uparrow_Y\!f(e)\cap Z\subseteq Z-\downarrow_Y\!f(e).$$

By Remark \ref{immage of filtered families}, $\{\uparrow_Y\! (f(\downarrow\!e\cap K_i): i\in I\}$ is a filtered family of subsets of $Y$, then
$\{\uparrow_Y\! f(\downarrow\!e\cap K_i))\cap Z: i\in I\}$ is a filtered family of saturated compact subsets of the subspace $Z$. As $Z$ is well-filtered, there is $i_0\in I$ such that
$$\uparrow_Y\!(f(\downarrow\!e\cap K_{i_0}))\cap Z\subseteq Z-\downarrow_Y\!f(e).$$
But this is impossible. In fact, choose one $u\in \downarrow\!e\cap K_{i_0}$. Then $u\in K_{i_0}\subseteq f^{-1}(Z)$, implying $f(u)\in \uparrow_Y\!(f(\downarrow\!e\cap K_{i_0}))\cap Z$. On the other hand,
$u\le e$ implies $f(u)\le f(e)$, so $f(u)\in \downarrow_Y\!\!f(e)$, thus $f(u)\not\in Z-\downarrow_Y\!f(e)$.

This contradiction shows that there exists $i\in I$ such that $K_i\subseteq U$. Hence $f^{-1}(Z)$ is well-filtered.

\end{proof}

Next we verify that the well-filteredness satisfies condition (K3).
\begin{lem}\label{intersections of wf subspaces}
Let $\{X_i\}_{i\in I}$ be a family of well-filtered subspaces of a sober space $X$. Then $\bigcap_{i\in I}X_i$ is a well-filtered subspace.
\end{lem}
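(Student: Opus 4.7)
The plan is to mimic the strategy from the preceding (K4) lemma: use Lemma \ref{subspace} to produce an offending point in the sober ambient space $X$, then transfer the resulting equality to a single well-filtered $X_{i_0}$ (chosen so as not to contain that point) in order to reach a contradiction.

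Set $W := \bigcap_{i \in I} X_i$, and take a filtered family $\{K_j\}_{j \in J}$ of compact saturated subsets of $W$ together with an open subset $V = V' \cap W$ of $W$ (with $V' \subseteq X$ open) satisfying $\bigcap_{j} K_j \subseteq V$. Assuming for contradiction that no $K_j$ is contained in $V$, we have $K_j \not\subseteq V'$ for every $j \in J$ because $K_j \subseteq W$. Apply Lemma \ref{subspace} with ambient sober space $X$, subspace $W$, and open set $V'$, obtaining $e \in (X - W) \cap (X - V')$ satisfying
$$\bigcap_{j \in J} \uparrow_X(\downarrow_X e \cap K_j) = \uparrow_X e.$$
Since $e \notin W$, pick $i_0 \in I$ with $e \notin X_{i_0}$. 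Because each $\downarrow_X e \cap K_j$ lies in $X_{i_0}$, Remark \ref{upset in subspaces} lets us intersect the identity above with $X_{i_0}$ to get
$$\bigcap_{j \in J} \uparrow_{X_{i_0}}(\downarrow_X e \cap K_j) = \uparrow_X e \cap X_{i_0}.$$
Since $X$ is $T_0$ one has $\uparrow_X e \cap \downarrow_X e = \{e\}$, and as $e \notin X_{i_0}$ the right-hand side is disjoint from $\downarrow_X e$ and therefore sits inside the open subset $X_{i_0} - \downarrow_X e$ of $X_{i_0}$.

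To finish, I would verify that $\{\uparrow_{X_{i_0}}(\downarrow_X e \cap K_j)\}_{j \in J}$ is a filtered family of compact saturated subsets of $X_{i_0}$ (compactness because $\downarrow_X e$ is closed in $X$, the intersection of a compact set with a closed set is compact, and saturation preserves compactness; filteredness as in Remark \ref{fund remark}). Well-filteredness of $X_{i_0}$ then delivers some $j_0 \in J$ with $\uparrow_{X_{i_0}}(\downarrow_X e \cap K_{j_0}) \subseteq X_{i_0} - \downarrow_X e$. This contradicts $\downarrow_X e \cap K_{j_0} \neq \emptyset$, which holds because the minimal irreducible closed set produced by Rudin's Lemma inside the proof of Lemma \ref{subspace} meets every member of the family. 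The main subtlety is spotting the correct transfer: intersecting the Lemma \ref{subspace} identity with $X_{i_0}$ simultaneously keeps the family compact, saturated and filtered inside $X_{i_0}$, and produces an open containment $\uparrow_X e \cap X_{i_0} \subseteq X_{i_0} - \downarrow_X e$ forced by nothing more than $T_0$-ness and the choice $e \notin X_{i_0}$; from there the well-filteredness of $X_{i_0}$ closes the argument.
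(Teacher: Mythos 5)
Your proposal is correct and follows essentially the same route as the paper's own proof: apply Lemma \ref{subspace} to obtain the point $e\notin\bigcap_{i}X_i$ with $\bigcap_{j}\uparrow_X(\downarrow_X e\cap K_j)=\uparrow_X e$, pick $X_{i_0}$ not containing $e$, intersect with $X_{i_0}$ to land inside the open set $X_{i_0}-\downarrow_X e$, and contradict $\downarrow_X e\cap K_{j_0}\neq\emptyset$ via well-filteredness of $X_{i_0}$. Your handling of the relative open set $V=V'\cap W$ and the explicit $T_0$ justification of $\uparrow_X e\cap X_{i_0}\subseteq X_{i_0}-\downarrow_X e$ are minor refinements of details the paper leaves implicit.
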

\begin{proof}
We only need to consider the case when $\bigcap_{i\in I}X_i\not=\emptyset.$

Let $\{K_i\}_{i\in I}$ be a filtered family of compact saturated subsets of the subspace $\bigcap_{i\in I}X_i$ and  $U$ be an open set of $X$ such that
$\bigcap_{i\in I}K_i\subseteq U$. If $K_i\not \subseteq U$ for all $i\in I$, then by Lemma \ref{subspace}, there is $e\not\in \bigcap_{i\in I}X_i$ such that
$$ \bigcap_{i\in I}\uparrow_X(\downarrow_X\!e\cap K_i)=\uparrow_Xe.$$
Thus there is $i_0$ such that  $e\not\in X_{i_0}$.  Note that $K_{i}\subseteq X_{i_0}$ for each $i\in I$.

$$ \bigcap_{i\in I}\uparrow_{X_{i_0}}(\downarrow_X\!e\cap K_i)=\bigcap_{i\in I}\uparrow_{X}\!\!(\downarrow_X\!e\cap K_i)\cap X_{i_0}=\uparrow_X\!\!e\cap X_{i_0}\subseteq X_{i_0}-\downarrow_X\!e.$$
Since $X_{i_0}$ is well-filtered, there is $i'\in I$ such that $\downarrow_X\!e\cap K_{i'}\subseteq X_{i_0}-\downarrow_X\!e$.
By the assumption, $\downarrow_X\!e\cap K_{i'}\not=\emptyset$. Choose $u\in \downarrow_X\!e\cap K_{i'}$. Then $u\in \downarrow_X\!e$, thus $u\not\in X_{i_0} - \downarrow_X\!e$. This contradicts $\downarrow_X\!e\cap K_{i'}\subseteq X_{i_0}-\downarrow_X\!e$. This contradiction shows that there must be $K_{i}$ such that $K_i\subseteq U$, hence $\bigcap_{i\in I}X_i$ is well-filtered.
\end{proof}

Now all conditions (K1)-(K4) are satisfied by the well-filteredness, therefore we have the following result.
\begin{thm}\label{WF-fication}
For any $T_0$ space $X$, there is a well-filtered space $W(X)$ and a continuous mapping $\eta_X: X\lra W(X)$ which is universal from $X$ to well-filtered spaces.
\end{thm}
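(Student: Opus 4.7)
The plan is to assemble the pieces already in place and invoke the Keimel--Lawson reflexivity criterion directly. Specifically, the strategy reduces to checking that well-filteredness satisfies each of (K1)--(K4) and then appealing to the general construction of K-fications from \cite{keimel-lawson}, so no new construction of $W(X)$ or $\eta_X$ needs to be built from scratch within this proof.

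First I would dispatch (K1) and (K2) in a single line each: (K1) holds by Remark~\ref{remark on sober spaces}(1), which records that every sober space is well-filtered; (K2) is immediate because well-filteredness is phrased purely in terms of open sets, filtered families of saturated compact sets, and inclusions, so it transfers along any homeomorphism. Neither deserves more than a sentence.

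Next I would cite the two key technical lemmas that constitute the real content. The verification of (K3) was accomplished in Lemma~\ref{intersections of wf subspaces}: any intersection of a family of well-filtered subspaces of a sober space is again well-filtered. The verification of (K4) was accomplished in the preceding lemma on preimages: for a continuous map $f\colon X\to Y$ between sober spaces and a well-filtered subspace $Z\subseteq Y$, the preimage $f^{-1}(Z)$ is well-filtered in $X$. Both proofs relied on Lemma~\ref{subspace} (which in turn depended on the topological Rudin lemma, Lemma~\ref{rutin'slemma}, and Lemma~\ref{fund lemma}), so the hard work has already been done.

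Having checked (K1)--(K4), I would conclude by quoting the Keimel--Lawson theorem stated in \S1: since the well-filtered property satisfies all four conditions, the recipe of \cite{keimel-lawson} produces, for each $T_0$ space $X$, a well-filtered space $W(X)$ together with a continuous map $\eta_X\colon X\to W(X)$ enjoying the required universal property. I do not expect any genuine obstacle at this final step; the only thing to watch is that the conditions are interpreted exactly as in \cite{keimel-lawson} (in particular that in (K3) and (K4) the ambient spaces are sober), which is precisely how Lemmas on (K3) and (K4) above were formulated. Hence the theorem follows.
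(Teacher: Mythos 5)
Your proposal is correct and follows exactly the paper's own route: the paper likewise disposes of (K1) and (K2) by Remark~\ref{remark on sober spaces}(1) and topological invariance, establishes (K4) and (K3) in the two lemmas of Section~2 (both resting on Lemma~\ref{subspace}), and then derives Theorem~\ref{WF-fication} by invoking the Keimel--Lawson criterion. There is nothing to add or correct.
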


\begin{cor}
The category of all well-filtered spaces is reflexive in the category of all $T_0$ spaces.
\end{cor}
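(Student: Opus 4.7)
The plan is to invoke the general K-fication criterion of Keimel and Lawson recalled in Section 1: whenever a topological property satisfies (K1)--(K4), every $T_0$ space admits a K-fication, together with a universal continuous map. Accordingly, the entire proof will consist of checking the four prerequisite conditions for the well-filtered property, so that Theorem \ref{WF-fication} emerges as a direct corollary of the Keimel--Lawson existence theorem applied to $X$.

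First, I would record that (K1) holds because every sober space is well-filtered, as noted in Remark \ref{remark on sober spaces}(1). Condition (K2) is immediate: the definition of a well-filtered space is formulated purely in terms of open sets and filtered families of compact saturated subsets, so any homeomorphism between $T_0$ spaces transports the property. Conditions (K3) and (K4) are precisely the content of Lemma \ref{intersections of wf subspaces} and the lemma preceding it (which asserts that $f^{-1}(Z)$ is well-filtered whenever $f$ is a continuous map between sober spaces and $Z$ is a well-filtered subspace of the codomain); both have already been established in the preceding pages.

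Once all four conditions have been ticked off, I would simply cite the Keimel--Lawson existence theorem to produce the required well-filtered space $W(X)$ together with the universal continuous mapping $\eta_X: X \lra W(X)$, and that completes the argument. There is no real obstacle at this final stage, since the substantive work has been done in Lemma \ref{fund lemma}, Lemma \ref{subspace}, the topological Rudin lemma \ref{rutin'slemma}, and the two lemmas deducing (K3) and (K4) from them. The theorem itself is a synthesis step: the actual construction of $W(X)$ as a subspace of the sobrification of $X$, and the verification of the universal property of $\eta_X$, are both encapsulated inside the Keimel--Lawson machinery and require no further input.
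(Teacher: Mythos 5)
Your proposal is correct and follows exactly the paper's own route: conditions (K1) and (K2) are immediate, (K3) and (K4) are Lemma~\ref{intersections of wf subspaces} and the lemma preceding it, and the Keimel--Lawson criterion then yields Theorem~\ref{WF-fication}, from which reflexivity of the subcategory of well-filtered spaces is the standard categorical restatement. No gaps.
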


\section{The product of two well-filtered spaces is well-filtered}

It is well-known that the product of two sober spaces is sober \cite{comp}. However it is still unknown whether the product of two well-filtered spaces is well-filtered.

\begin{prop}
If $X$ and $Y$ are well-filtered spaces, then the product space $X\times Y$ is well-filtered.
\end{prop}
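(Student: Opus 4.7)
My plan is to embed the product $X\times Y$ inside a sober ambient space and then realize $X\times Y$ as the intersection of two well-filtered subspaces, so that the lemmas just proved verifying conditions (K3) and (K4) finish the job with no further work on filtered families. Let $X^s$ and $Y^s$ denote the sobrifications of $X$ and $Y$, and identify $X$ and $Y$ with their images under the canonical topological embeddings $X\hookrightarrow X^s$ and $Y\hookrightarrow Y^s$. Since the product of two sober spaces is sober \cite{comp}, $X^s\times Y^s$ is sober; moreover the subspace topology on $X\times Y$ inherited from $X^s\times Y^s$ coincides with the product of the subspace topologies inherited by $X$ and by $Y$, i.e.\ with the given product topology on $X\times Y$.

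Next I apply the lemma establishing condition (K4) for well-filteredness to the coordinate projections $\pi_1: X^s\times Y^s\lra X^s$ and $\pi_2: X^s\times Y^s\lra Y^s$, which are continuous maps between sober spaces. Since $X$ is a well-filtered subspace of $X^s$, the (K4) lemma gives that $\pi_1^{-1}(X)=X\times Y^s$ is a well-filtered subspace of $X^s\times Y^s$; symmetrically $\pi_2^{-1}(Y)=X^s\times Y$ is well-filtered.

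Finally, as subsets of $X^s\times Y^s$ we plainly have $X\times Y=(X\times Y^s)\cap (X^s\times Y)$, so \lemref{intersections of wf subspaces} (which is the (K3) condition for well-filteredness) implies that $X\times Y$ is a well-filtered subspace of the sober space $X^s\times Y^s$; by the topological identification recorded above, the product space $X\times Y$ is itself well-filtered. The only delicate point is the compatibility between the subspace topology and the product topology, which is a standard categorical observation about products and subspaces; once this is in place, the argument reduces to two invocations of the lemmas establishing (K3) and (K4), and I do not anticipate any substantive further obstacle.
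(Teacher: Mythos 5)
Your proposal is correct, and it takes a genuinely different route from the paper. The paper proves the proposition directly on the product: it takes a filtered family $\{K_i\}$ of compact saturated sets in $X\times Y$ with $\bigcap_i K_i\subseteq U$, assumes each $K_i$ meets $U^c$, invokes the topological Rudin lemma to get a minimal irreducible closed $F\subseteq U^c$ meeting every $K_i$, and then works with the projections $p_X(K_i\cap F)$, $p_Y(K_i\cap F)$ together with Lemma~\ref{fund lemma} and the minimality of $F$ to locate a point $(x_0,y_0)\in\bigcap_i K_i\subseteq U$ and derive a contradiction. You instead observe that the already-established (K3) and (K4) lemmas do all the work once $X\times Y$ is sitting inside the sober space $X^s\times Y^s$: the projections are continuous maps between sober spaces, so $\pi_1^{-1}(X)=X\times Y^s$ and $\pi_2^{-1}(Y)=X^s\times Y$ are well-filtered subspaces, and their intersection $X\times Y$ is well-filtered by Lemma~\ref{intersections of wf subspaces}. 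The ingredients you use beyond the two lemmas are all standard and available in the paper's references: the sobrification map of a $T_0$ space is a topological embedding, the product of sober spaces is sober (stated explicitly in the paper), and the subspace topology on $X\times Y\subseteq X^s\times Y^s$ agrees with the product of the subspace topologies. What your argument buys is economy and conceptual clarity --- the product theorem is exhibited as a formal consequence of the Keimel--Lawson verification already carried out, with no new manipulation of filtered families. What the paper's direct argument buys is self-containedness (it never needs the sobrification or the embedding facts) and an explicit construction showing where the contradiction arises; it would also survive in a context where one had not already proved the (K3)/(K4) lemmas. Both proofs are valid and non-circular, since the paper's (K3) and (K4) lemmas are established independently of the product proposition.
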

\begin{proof}
Let $\{K_i\}_{i\in I}$ be a filtered family of compact saturated subsets of $X\times Y$ and $W\subseteq X\times Y$ open such that
$$\bigcap_{i\in I}K_i\subseteq U.$$
Assume that $K_i\cap U^c\not=\emptyset$ for all $i\in I$. Then there is a minimal closed set $F\subseteq X\times Y$, $F\subseteq U^c$ such that
$$F\cap K_i\not=\emptyset (\forall i\in I).$$
Then
$$\bigcap_{i\in I}\uparrow_X\!p_X(K_i\cap F)=\uparrow_XC_X, ~~~ \bigcap_{i\in I}\uparrow_X\!p_Y(K_i\cap F)=\uparrow_YC_Y,$$
where $C_X\subseteq X $ and $ C_Y\subseteq Y$ are nonempty anti-chains, and $p_X: X\times Y\lra X$ and $p_Y: X\times Y\lra Y$ are the projection mappings.

Choose an element $x_0\in C_X$ and an element  $y_0\in C_Y$. We have the following
$$(\downarrow_X\!x_0\times Y)\cap (F\cap K_i)\not=\emptyset (\forall i\in I).$$

In fact, for each $i\in I$, $x_0\in \uparrow_X\!p_X(K_i\cap F)$, so there exists $(u_1, u_2)\in K_i\cap F$ with $x_0\ge u_1$. Hence $(u_1, u_2)\in (\downarrow_X\!x_0\times Y)\cap (K_i\cap F)$.

Similarly, $(X\times \downarrow_Y\!y_0)\cap F\cap K_i\not=\emptyset (\forall i\in I)$.

By the minimality of $F$, we have $F\subseteq \downarrow_X\!x_0\times Y$, as well as $F\subseteq X\times \downarrow_Y\!y_0$. Therefore
$$ F\subseteq (\downarrow_X\!x_0\times Y)\cap (X\times \downarrow_Y\!y_0)=\downarrow_X\!x_0\times \downarrow_Y\!y_0.$$
Since $F\cap K_i\not=\emptyset$, $(\downarrow_X\!x_0\times \downarrow_Y\!y_0)\cap K_i\not=\emptyset$ holds for each $i\in I$.
Since each $K_i$ is  saturated , we have $(x_0, y_0)\in K_i (\forall i\in I)$. Thus $(x_0, y_0)\in \bigcap_{i\in I}K_i\subseteq U$.
There are open sets $U_1\subseteq X, U_2\subseteq Y$ such that $(x_0, y_0)\in U_1\times U_2\subseteq U$.

Applying Lemma \ref{fund lemma} to $\{p_X(K_i\cap F): i\in I\}$ and $\{p_Y(K_i\cap F): i\in I\}$ we have
$$\bigcap_{i\in I}\uparrow_X\!(\downarrow_X\!x_0\cap p_X(K_i\cap F))=\uparrow_X\!x_0, \bigcap_{i\in I}\uparrow_Y\!(\downarrow_Y\!y_0\cap p_Y(K_i\cap F))=\uparrow_Y\!y_0.$$
As $X$ and $Y$ are well-filtered, and $\{K_i: i\in I\}$ is filtered, there is a $K_{i_0}$ such that
$$\downarrow_X\!x_0\cap p_X(K_{i_0}\cap F)\subseteq U_1, \downarrow_Y\!y_0\cap p_Y(K_{i_0}\cap F)\subseteq U_2.$$
Thus
\[
\begin{array}{rcl}
F\cap K_{i_0}&\subseteq& (\downarrow_X\!x_0\times \downarrow_Y\!y_0)\cap (p_X(K_{i_0}\cap F)\times p_Y(K_{i_0}\cap F))\\
             &=&(\downarrow_X\!x_0\cap p_X(K_{i_0}\cap F))\times (\downarrow_Y\!y_0\cap p_Y(K_{i_0}\cap F)\\
             &\subseteq& U_1\times U_2\\
             &\subseteq& U.\end{array}\]
This contradicts $F\subseteq U^c$.

The proof is completed.

\end{proof}

\noindent{\bf Acknowledgement}
This work was supported by Singapore Ministry of Education Academic Research Fund Tier 2 grant MOE2016-T2-1-083 (M4020333); NTU Tier 1 grants  RG32/16 (M4011672); NSFC (11661057); the Ganpo
555 project for leading talent of Jiangxi Province and the Natural
Science Foundation of Jiangxi Province, China (20161BAB2061004); NSFC (11361028, 61300153, 11671008, 11701500, 11626207); NSF Project of Jiangsu Province, China (BK20170483); NIE AcRF (RI 3/16 ZDS), Singapore.

\end{document}